\documentclass[]{amsart}

\usepackage{amssymb}
\usepackage{amsmath}
\usepackage{amsthm}
\usepackage{amscd}

\usepackage{color}

\newtheorem{theorem}{Theorem}%[section]

\newtheorem*{lemma}{Lemma}

\def\Z{{\mathbb Z}}

\definecolor{darkgreen}{cmyk}{1,0,1,.2}

\renewcommand\paragraph[1]{\medskip\textbf{#1} }

\begin{document}
\title[Free products]{Commensurability and
  QI classification of free products of
  finitely generated abelian groups}

\author{Jason A. Behrstock} \address{Department of
Mathematics\\Columbia University\\New York, NY 10027}
\email{jason@math.columbia.edu}

\author{Tadeusz Januszkiewicz} \address{Department of
Mathematics\\Ohio State University\\Columbus, OH 43210\\
and Mathematical Instutute of Polish Academy of Sciences;
on leave from Wroclaw University}
\email{tjan@math.ohio-state.edu}

\author{Walter D. Neumann} \address{Department of  
Mathematics\\Barnard College, Columbia University\\New York, NY  
10027} 
\email{neumann@math.columbia.edu}

\thanks{Research supported under NSF grants no.\ DMS-0604524,
DMS-0706259, and DMS-0456227}

\date{\today}

\maketitle

The following gives the complete commensurability and quasi-isometry
classification of free products of finitely generated abelian groups.
The quasi-isometry classification is a special case of  
Papasoglu and Whyte \cite{PapasogluWhyte:ends}.
\begin{theorem}
  Let $G_i$ be a free product of a finite set $S_i$ of finitely generated
  abelian groups for $i=1,2$. Then the
  following are equivalent
    \begin{enumerate}
        \item The sets of ranks $\ge2$ of groups in $S_1$ and $S_2$
          are equal (the \emph{rank} of a finitely generated group is
          the rank of its free abelian part);
	\item $G_{1}$ and $G_{2}$ are commensurable.
	\item $G_{1}$ and $G_{2}$ are quasi-isometric.
    \end{enumerate}
\end{theorem}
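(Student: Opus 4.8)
The plan is to establish the cycle $(1)\Rightarrow(2)\Rightarrow(3)\Rightarrow(1)$, working throughout in the genuinely non-elementary case (both $G_i$ infinitely-ended, i.e.\ honest free products of at least two nontrivial factors and not the infinite dihedral group); the finitely many elementary cases are checked by hand. The implication $(2)\Rightarrow(3)$ is standard, since a finitely generated group is quasi-isometric to each of its finite-index subgroups, so commensurable groups are quasi-isometric. For $(3)\Rightarrow(1)$ I would invoke Papasoglu--Whyte \cite{PapasogluWhyte:ends}: the quasi-isometry type of the fundamental group of a finite graph of groups with finite edge groups depends only on the \emph{set} of quasi-isometry types of its one-ended vertex groups. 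Here the edge groups are trivial and the one-ended vertex groups are exactly the factors of rank $\ge2$ (a finitely generated abelian group is one-ended iff its rank is $\ge2$, two-ended iff rank $1$, finite iff rank $0$); since $\Z^{r}\sim_{QI}\Z^{r'}$ iff $r=r'$, the set of QI-types of one-ended factors is precisely the set of ranks $\ge2$. Thus the real work is $(1)\Rightarrow(2)$.

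The main tool for $(1)\Rightarrow(2)$ is the structure of finite-index subgroups of $G=A_1*\dots*A_m$ via the Kurosh subgroup theorem and Bass--Serre theory. Viewing $G$ as the fundamental group of a graph of groups with trivial edge groups and vertex groups the $A_i$, an index-$n$ subgroup $H$ acts on the Bass--Serre tree $T$ with $H\backslash T$ a finite graph of groups, whence $H\cong F_s * C_1 * \dots * C_t$ where each $C_k$ is a finite-index subgroup of a conjugate of some $A_i$ (so $C_k\cong\Z^{k_i}\times F_i'$, of the same rank $k_i$). Orbit counting gives the bookkeeping: the number of $C_k$ of ``type $i$'' equals the number of orbits of $A_i$ on $G/H$, and the free rank $s$ is then determined by the Euler characteristic relation $\chi(H)=n\,\chi(G)$, with $\chi(\Z^{k})=0$ for $k\ge1$, so that $\chi=1-s-t$ after one reduces to the torsion-free case below. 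These invariants can be prescribed by the explicit construction $\phi\colon G\to Q$ to a finite group with $\phi|_{A_i}=\phi_i$: taking $H=\ker\phi$, the factor $A_i$ contributes $|Q|/|\phi_i(A_i)|$ copies of $\ker\phi_i$. Choosing the images $\phi_i(A_i)\le Q$ and the kernels independently lets me tune, for each rank, its multiplicity and (via $\chi$) the free rank.

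With this toolkit, $(1)\Rightarrow(2)$ proceeds as follows. First pass to a torsion-free finite-index subgroup of each $G_i$ (standard: torsion is conjugate into the finite parts $F_i$, and a $\phi$ injective on each $F_i$ has torsion-free kernel), reducing to the case where every abelian factor is free abelian; factors of rank $0$ then disappear into the free part and rank-$1$ factors become $\Z$'s absorbed into it, so each $G_i$ is commensurable to a group $F_{s_i}*(\Z^{r_1})^{*a_{i,1}}*\dots*(\Z^{r_p})^{*a_{i,p}}$ with $\{r_1,\dots,r_p\}$ the common set of ranks $\ge2$ and each $a_{i,j}\ge1$. It remains to realize a common finite-index type $F_{s}*(\Z^{r_1})^{*b_1}*\dots*(\Z^{r_p})^{*b_p}$: take $b_j\ge\max(a_{1,j},a_{2,j})$, choose indices with $n_1|\chi(G_1)|=n_2|\chi(G_2)|$ (possible since both Euler characteristics are negative), which forces the same $s$ on both sides, and realize this type in each $G_i$ via the $\phi\colon G_i\to Q_i$ construction. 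Note this explains why \emph{multiplicities} wash out: one cannot lower the number of rank-$r$ factors below the number present, but one can raise it on the other side and meet in the middle.

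The step I expect to be the main obstacle is precisely this realization lemma: that the numerical constraints (prescribed multiplicities $b_j\ge a_{i,j}$ together with the $\chi$-forced value of $s$ at a permissible index) are not merely necessary but \emph{sufficient} for a finite-index subgroup of the given isomorphism type to exist. Necessity is the orbit-counting above; sufficiency means actually building a transitive $G_i$-action on a finite set realizing the target orbit pattern of each factor and the target Betti number simultaneously, which I would assemble from the $\ker\phi$ construction---using non-surjective $\phi_i$ to raise multiplicities to $|Q|/|\phi_i(A_i)|$ and combining or inducing actions to pin $s$ to the forced value. Finally, the easy converse invariance, $(2)\Rightarrow(1)$ independently of \cite{PapasogluWhyte:ends}, follows directly from the Kurosh description: the rank-$\ge2$ free abelian factors of any finite-index subgroup are commensurable to the rank-$\ge2$ factors of $G_i$, so the set of ranks $\ge2$ is itself a commensurability invariant, closing the loop and making the commensurability classification self-contained.
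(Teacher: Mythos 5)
Your overall strategy for $(1)\Rightarrow(2)$ is essentially the paper's: pass to a torsion-free finite-index subgroup, equalize the multiplicities of the rank $\ge 2$ free abelian factors on the two sides, and then use the Euler characteristic to force the free ranks to agree. You phrase this through the Kurosh subgroup theorem and kernels of homomorphisms to finite groups, where the paper works with explicit finite covers of wedges of tori; the implications $(2)\Rightarrow(3)$ and $(3)\Rightarrow(1)$ are handled identically (and your explicit exclusion of the one-ended and two-ended degenerate cases, plus the Kurosh-based direct proof that the set of ranks $\ge 2$ is a commensurability invariant, are points where you are more careful and more self-contained than the paper).

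The one genuine gap is the step you yourself flag: the ``realization lemma'' is asserted with only a sketch, and as you have set it up it carries a quantifier problem --- the achievable multiplicities $b_j$ in an index-$n_i$ subgroup depend on $n_i$, while $n_i$ is simultaneously constrained by $n_1|\chi(G_1)|=n_2|\chi(G_2)|$, so choosing the $b_j$ first and the indices afterwards requires an argument that both constraint systems can be solved at once. The paper avoids this by decoupling the two constraints into successive covers. First it takes a degree-$Y$ cover of each wedge, $Y$ a common multiple of all the multiplicities, in which each torus of dimension $n_i\ge 2$ has exactly $Y/r_i$ preimages, each covering its image with degree $r_i$; this makes the number of tori of each dimension $\ge 2$ equal to $Y$ on both sides while leaving the free rank uncontrolled. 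Second, it observes that a degree-$d$ cyclic cover which is connected over every torus of dimension $\ge 2$ --- in your language, $\ker\phi$ for $\phi$ onto $\Z/d$ with $\phi$ restricted to each $\Z^{n_i}$ factor surjective --- leaves all the higher multiplicities untouched and multiplies $\chi$ by $d$; taking $d=|\chi(W_2')|$ on one side and $d=|\chi(W_1')|$ on the other equalizes $\chi$, hence the number of circles, hence the homotopy type. Translating these two specific covers into your $\ker\phi$ framework is exactly what is needed to close your realization lemma; without that (or an equivalent construction) the proposal is not yet a complete proof.
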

\begin{proof} The main step is to show (1) implies (2). By going to
  finite index subgroups of $G_1$ and $G_2$ we can assume the groups
  in $S_1$ and $S_2$ are free abelian (take the kernel of the map of
  $G_i$ to a product of finite quotients of the groups in $S_i$ by
  torsion free normal subgroups, or see the lemma below for a more
  general statement). Let $n_1=1$ and let $n_2,\dots, n_k$ be the
  ranks $\ge 2$ of the groups in $S_1$ and in $S_2$. Let $r_i$ and
  $s_i$ be the number of rank $n_i$ groups in $S_1$ and $S_2$
  respectively.  We identify $G_{1}$ with the fundamental group of the
  topological space $W_{1}$ consisting of the wedge of $r_{i}$
  $n_i$--dimensional tori for each $i$; similarly we let
  $G_{2}=\pi_{1}(W_{2})$, where $W_{2}$ is defined similarly using the
  $s_{i}$'s.  A finite cover of such a wedge of tori is homotopy
  equivalent to a wedge of tori.

  We proceed in two steps. 
  First, using finite covers we
  replace $(r_{1},r_{2},\ldots,r_k)$ and
  $(s_{1},s_{2},\ldots,s_k)$ by the sequences $(R_{1}, Y, Y,
  \ldots,Y)$ and $(S_{1}, Y, Y, \ldots,Y)$, respectively, where
  $R_{1}, S_{1},$ and $Y$ are positive integers.  Then we show
  that, again taking finite covers, we can leave the $Y$'s unchanged and
  replace both $R_{1}$ and $S_{1}$ by a positive integer $X$,
  making the two sequences equal and completing the argument.
    
    For the first step, let $Y$ be a common multiple of 
    $r_{2},\ldots,r_{n},s_{2},\ldots s_{n}$. 
    We construct $W'_1$ and
    $W'_2$ in the following way.  $W'_1$ is a finite cover of $W_1$
    which for each $2\leq i\leq k$
    satisfies the following: it has $Y$ tori of dimension
    $n_i$; each  $n_i$--dimensional torus in 
    $W_1$ has $Y/r_{i}$  $n_i$--dimensional tori in $W'_1$ which project 
    to it; and each torus of $W'_1$ covers its image in $W_1$ with 
    degree $r_i$. Hence  $W'_{1}$ is a covering of 
    $W_{1}$ with degree $Y$.  
    Similarly construct $W'_2$ from $W_{2}$ using the $s_{i}$. 
    Note that $W'_1$ and $W'_2$ are each homotopy equivalent to wedges
    of tori (by contracting an embedded tree connecting the lifts of the
    basepoint), but this construction doesn't control 
    the number of $1$--dimensional tori in these wedges of tori.
    
    For step 2 we notice that, given two spaces which are homotopy
    equivalent to wedges
    of tori of dimension up to $n$, if the number of $n_i$--dimensional
    tori is the same in each for all $2\leq i \leq k$, then the number
    of $1$--tori in the equivalent wedges of tori are equal 
    if and only if the spaces 
    have the same Euler characteristic.  Now, note that a
    cyclic cover of degree $d$ which is a connected cover on each $n_i$--torus
    for $i\ge 2$
    leaves the number of tori of dimensions $\ge 2$ unchanged.  Thus,
    taking $W'_1$ and $W'_2$ as above, we can take a $\chi(W'_2)$--fold
    cyclic cover of $W'_1$ and a $\chi(W'_1)$--fold cyclic cover of
    $W'_2$ to obtain a pair of spaces homotopy equivalent to wedges of
    tori with the same number of tori for
    $2\leq i\leq n$ and the same Euler characteristic. Hence these two
    covers are homotopy equivalent, thereby
    completing the argument.
    
    (2) implies (3) for any finitely generated group.  That (3)
    implies (1) follows from the easy fact that $\Z^{n}$ is
    quasi-isometric to $\Z^{m}$ if and only if $n=m$ plus the result
    of Papasoglu and Whyte \cite{PapasogluWhyte:ends}: given two
    groups which are not virtually $\Z$ and which admit graph of groups
    decompositions whose edge groups are finite and whose vertex groups
    have one or less ends, they are quasi-isometric if and
    only if they both have the same set of quasi-isometry types
    of vertex groups.
% (Alternative approaches are to use
% Behrstock--Dru\c{t}u--Mosher \cite{BehrstockDrutuMosher:thick} 
% or Dru\c{t}u--Sapir \cite{DrutuSapir:TreeGraded}.)
\end{proof}

The class of groups considered here is also quasi-isometrically rigid:
\begin{theorem}
  If a group $G$ is quasi-isometric to a free product of finitely
  generated abelian groups then it is commensurable with such a free product.
\end{theorem}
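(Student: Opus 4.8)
The plan is to run the standard quasi-isometric rigidity scheme for free products, using Papasoglu--Whyte together with Theorem 1 to pin down the vertex groups. Write $F$ for the free product of finitely generated abelian groups to which $G$ is quasi-isometric, and argue according to the number of ends of $F$, which is a quasi-isometry invariant. If $F$ is finite then $G$ is finite; if $F$ is two-ended (i.e.\ $F\cong\Z$ or $F$ is infinite dihedral) then $G$ is virtually $\Z$; and if $F$ is one-ended then $F\cong\Z^{n}$ with $n\ge2$, so $G$ is quasi-isometric to $\Z^{n}$ and hence, by Gromov's polynomial growth theorem, virtually $\Z^{n}$. In each of these cases $G$ is already commensurable with a free product of finitely generated abelian groups, so the substantive case is when $F$, and therefore $G$, has infinitely many ends.

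Assume then that $G$ has infinitely many ends. Since finite presentability is a quasi-isometry invariant and $F$ is finitely presented, $G$ is finitely presented, so by Stallings' theorem and Dunwoody's accessibility $G$ is the fundamental group of a finite graph of groups with finite edge groups and vertex groups having at most one end. I would now compare this decomposition with the free-product decomposition of $F$ by means of Papasoglu--Whyte \cite{PapasogluWhyte:ends}: since $G$ and $F$ are quasi-isometric and neither is virtually $\Z$, their one-ended vertex groups realise the same set of quasi-isometry types. The one-ended vertex groups of $F$ are exactly the abelian factors of rank $\ge2$, i.e.\ the groups $\Z^{n_{i}}$ with $n_{i}\ge2$; hence every one-ended vertex group of $G$ is quasi-isometric to some $\Z^{n_{i}}$, and the rigidity of $\Z^{n}$ (again Gromov, together with the fact that $\Z^{n}\sim_{QI}\Z^{m}$ forces $n=m$) shows that each such vertex group is virtually $\Z^{n_{i}}$.

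At this point $G$ splits as a finite graph of groups with finite edge groups and with vertex groups that are either finite or virtually free abelian. The remaining, purely algebraic, step is to produce a finite-index subgroup that is literally a free product of finitely generated abelian groups. Here I would use that such a $G$ is residually finite (it is the fundamental group of a finite graph of residually finite groups amalgamated over finite subgroups), together with the fact that there are only finitely many conjugacy classes of finite subgroups, to pass to a finite-index normal subgroup $N\trianglelefteq G$ meeting every edge group trivially and meeting every vertex group in a free abelian subgroup. Acting on the Bass--Serre tree $T$ of the decomposition, $N$ then has trivial edge stabilisers and free abelian vertex stabilisers, so $N$ is the fundamental group of a graph of groups over trivial edge groups and therefore
\[
N\;\cong\;\Bigl(\ast_{v}\,(N\cap G_{v})\Bigr)\ast F_{r},
\]
a free product of finitely generated free abelian groups (the free factor $F_{r}$ being a free product of copies of $\Z$). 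Thus $G$ is commensurable with the free product $N$, as required; alternatively, one may simply record the multiset of ranks $n_{i}\ge2$ occurring and invoke Theorem 1.

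I expect the main obstacle to be the passage from ``$G$ is quasi-isometric to $F$'' to the matching of the one-ended pieces: everything hinges on applying Papasoglu--Whyte to identify the quasi-isometry types of the vertex groups of $G$'s accessible decomposition with those of $F$, and on the quasi-isometric rigidity of $\Z^{n}$ to upgrade ``quasi-isometric to $\Z^{n}$'' to ``virtually $\Z^{n}$.'' By contrast, the final algebraic step is routine once residual finiteness and the finiteness of the number of conjugacy classes of finite subgroups are in hand.
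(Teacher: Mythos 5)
Your overall architecture is the same as the paper's: use Papasoglu--Whyte to produce a splitting of $G$ over finite edge groups whose infinite vertex groups are quasi-isometric to the $\Z^{n_i}$, upgrade ``quasi-isometric to $\Z^{n_i}$'' to ``virtually $\Z^{n_i}$'' by the rigidity of $\Z^n$, and then pass to a finite-index subgroup of $G$ that is an honest free product. (The paper invokes Theorem 0.4 of Papasoglu--Whyte directly for the splitting rather than going through Stallings--Dunwoody plus the classification statement, and it carries out the last step by an explicit finite covering of a graph of spaces rather than by residual finiteness; these are cosmetic differences. Your preliminary reduction to the infinite-ended case is fine.)

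There is, however, a genuine gap in your final algebraic step. Residual finiteness together with the finiteness of the set of conjugacy classes of finite subgroups yields a finite-index normal subgroup $N$ that is torsion-free, and hence meets every edge group trivially; but it does \emph{not} follow that $N$ meets every vertex group in a free abelian subgroup. A torsion-free finite-index subgroup of a virtually $\Z^n$ group is a Bieberbach group and need not be abelian: the Klein bottle group $\langle a,b \mid bab^{-1}=a^{-1}\rangle$ is torsion-free and contains $\Z^2$ with index $2$. So, as written, your $N$ is only a free product of torsion-free virtually abelian groups with a free group, which is not yet the conclusion of the theorem. To close the gap you must force $N\cap G_v$ into a chosen finite-index free abelian normal subgroup $A_v\le G_v$, which is exactly what the paper's covering-space lemma accomplishes (its construction lets you prescribe the subgroup $N_v\le G_v$ in advance); alternatively, having reached $N=\bigl(\ast_v P_v\bigr)\ast F_r$ with each $P_v$ Bieberbach, take the kernel of $N\to\prod_v P_v/A_v$ with $A_v$ the translation lattice of $P_v$ and apply the Kurosh subgroup theorem. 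Either patch is routine, but the step is not automatic from torsion-freeness, and it is precisely the point the paper's lemma is designed to handle.
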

\begin{proof}
  By the main theorem of \cite{PapasogluWhyte:ends} (Theorem 0.4), we
  have that $G$ has a graph of groups decomposition with finite edge
  groups and where each vertex group is quasi-isometric to a finitely
  generated abelian group.  Since a group is quasi-isometric to
  $\Z^{i}$ if and only if it has a finite-index subgroup isomorphic to
  $\Z^{i}$ \cite{Bass:DegreeOfGrowth, Gersten:dimension,
  Gromov:PolynomialGrowth}, we claim that $G$ has a finite-index
  subgroup which is a free product of 
  finitely generated free abelian groups.  This
  follows from the lemma below.
\end{proof}
\begin{lemma}
  If a group $G$ splits as a finite graph of groups with finite edge
  groups and virtually torsion-free vertex groups $G_v$, then $G$ has
  a finite-index subgroup which is a free product of a collection of
  torsion-free subgroups of each of the $G_v$'s together with an
  additional free group.
\end{lemma}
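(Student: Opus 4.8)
The plan is to produce a finite-index subgroup $H\le G$ that is \emph{torsion-free}, and then read off the desired decomposition from the action of $H$ on the Bass--Serre tree $T$ of the given splitting. Once $H$ is torsion-free it meets every (finite) edge stabilizer of $T$ trivially, so $H$ acts on $T$ with trivial edge stabilizers; since $[G:H]<\infty$ the quotient $T/H$ is a finite graph, and the fundamental theorem of Bass--Serre theory then expresses $H$ as a free product $\bigl(\ast_{v}H_v\bigr)\ast F$, where the $H_v$ run over representatives of the vertex orbits and $F$ is a finitely generated free group of rank the first Betti number of $T/H$. Each $H_v$ is the intersection of $H$ with a conjugate of one of the $G_v$, hence a torsion-free subgroup of (a conjugate of) that $G_v$; and since $T/H\to T/G$ is onto, every $G_v$ contributes at least one factor. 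This is exactly the asserted decomposition, so the whole lemma reduces to showing that $G$ is virtually torsion-free.

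To build such an $H$, first choose for each vertex a torsion-free normal subgroup $N_v\trianglelefteq G_v$ of finite index (possible since $G_v$ is virtually torsion-free: take a finite-index torsion-free subgroup and pass to its normal core), and set $Q_v:=G_v/N_v$, a finite group. Because each edge group $G_e$ is finite and $N_v$ is torsion-free, the composite $G_e\to G_v\to Q_v$ is injective, so replacing every vertex group $G_v$ by $Q_v$ (and keeping the finite edge groups, with these injective edge-to-vertex maps) yields a new finite graph of \emph{finite} groups; let $\bar G$ be its fundamental group. The quotient maps $q_v\colon G_v\to Q_v$ are compatible with the edge inclusions and carry stable letters to stable letters, so they induce a surjection $\psi\colon G\to\bar G$ restricting to $q_v$ on each $G_v$.

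The key point is that $\psi$ is injective on finite subgroups. Any finite subgroup of $G$ fixes a vertex of $T$ and is therefore conjugate into some $G_v$, on which $\psi$ acts as $q_v$ followed by the (injective) inclusion $Q_v\hookrightarrow\bar G$; since $q_v$ has torsion-free kernel $N_v$ it is injective on finite subgroups, and the claim follows. Now $\bar G$, being the fundamental group of a finite graph of finite groups, is virtually free; pick a finite-index free subgroup $\bar H\le\bar G$ and set $H:=\psi^{-1}(\bar H)$, a finite-index subgroup of $G$. If $H$ contained a nontrivial torsion element $x$, then $\psi(x)$ would be a nontrivial (by injectivity on torsion) torsion element of the free group $\bar H$, which is impossible; hence $H$ is torsion-free, as required.

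I expect the main obstacle to be the passage from a genuine amalgam/HNN structure with nontrivial edge identifications to an honest free product: one cannot simply map each vertex group to a finite quotient inside a single finite group, since the edge relations must be respected. The device that resolves this is the intermediate group $\bar G$, which collapses the vertex groups to finite quotients while preserving the graph-of-groups structure; landing in a \emph{virtually free} group and pulling back a free subgroup is what simultaneously trivializes the edge stabilizers and removes the torsion. Verifying the functoriality giving $\psi$ and its injectivity on finite subgroups (via the standard facts that vertex groups inject into the fundamental group and that finite subgroups act elliptically) is the technical heart; the concluding appeal to Bass--Serre theory is then routine.
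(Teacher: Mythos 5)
Your proof is correct, but it takes a genuinely different route from the paper. The paper works topologically: it realizes $BG$ as a graph of spaces, chooses torsion-free normal subgroups $N_v\le G_v$ of index $y_v$, sets $Y$ to be a common multiple of the $y_v$, and explicitly assembles a connected degree-$Y$ cover out of $Y/y_v$ copies of each $BN_v$ and $Y/|K_e|$ copies of contractible edge spaces; the resulting cover visibly has fundamental group a graph of groups with trivial edge groups and vertex groups copies of the chosen $N_v$, which is the desired free product. You instead prove the stronger-sounding intermediate statement that $G$ is virtually torsion-free, by collapsing each vertex group to a finite quotient $Q_v=G_v/N_v$, checking the edge maps remain injective, invoking the Karrass--Pietrowski--Solitar theorem that a finite graph of finite groups is virtually free, and pulling back a free finite-index subgroup along the induced surjection $\psi\colon G\to\bar G$; torsion-freeness of $H=\psi^{-1}(\bar H)$ then follows from injectivity of $\psi$ on finite subgroups (which act elliptically on the Bass--Serre tree), and the decomposition is read off from the action of $H$ on that tree. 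All the steps you use are standard and correctly applied. The trade-off: your argument is cleaner and more conceptual, and it yields the extra conclusion that $G$ is virtually torsion-free, but it leans on the nontrivial fact that finite graphs of finite groups are virtually free (whose standard proofs are close in spirit to the very covering-space construction the paper carries out directly), and it gives less explicit control over the index of $H$ and over which torsion-free subgroups of the $G_v$ appear as free factors --- in your version they are intersections $H\cap gG_vg^{-1}$ rather than prescribed normal subgroups $N_v$ with known multiplicities. The paper's hands-on construction keeps that quantitative control, which is in the spirit of the covering arguments used in the main theorem, though the lemma as stated does not require it.
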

\begin{proof}
  Let $K_e$ denote the edge group corresponding to an edge $e$.  A
  standard classifying space $BG$ for $G$ is obtained by gluing spaces
  into a ``graph of spaces'' with spaces $BG_v$ at the vertices glued
  in the obvious way to spaces $BK_e\times[0,1]$ for the edges.  For
  each $i$ let $N_v$ be a torsion free normal subgoup of $G_v$ of
  index $y_v$ and let $Y$ be a common multiple of the $y_v$. If $e$ is
  an edge that abuts vertex $i$ then the image of $BK_e$ in $BG_v$
  lifts to $y_v/|K_e|$ copies of (the contractible space) 
  $\widetilde{BK}_e$ in the
  cover $BN_v$ of $BG_v$. Take a new graph of spaces where each $BG_v$
  is replaced by $Y/y_v$ copies of $BN_v$ and each $BK_e$ is replaced
  by $Y/|K_e|$ copies of $\widetilde{BK}_e\times[0,1]$. If $e$ abuts
  vertex $i$ we glue $y_v/|K_e|$ copies of the edge space to each of
  the $Y/y_v$ $BN_v$'s. We can clearly do this to get a space which is
  a connected finite cover of the original graph of spaces. It then
  has fundamental group a graph of groups with trivial edge groups and
  with vertex groups equal to copies of the $N_v$. This is a free
  product of the vertex groups and an additional free group given by
  the fundamental group of the underlying graph.
\end{proof}

We note that versions of the results will hold for free products
allowing other groups that have many isomorphic subgroups of finite
index and that are quasi-isometrically rigid up to commensurability,
for example groups of the form (free)$\times \mathbb Z^n$ with $n\ge 1$, the
Heisenberg group, etc.

\end{document}